\newtheorem{theorem}{Theorem}
\theoremstyle{plain}
\newtheorem{definition}{Definition}
\newtheorem{example}{Example}
\newtheorem{lemma}{Lemma}
\newtheorem{remark}{Remark}
\numberwithin{equation}{section}
\begin{document}
\title[$F$-harmonic maps as global maxima]{$F$-harmonic maps as global maxima%
}
\author{Mohammed Benalili}
\address{Dept. of Mathematics, Facult\'{e} des Sciences, Universit\'{e}
Abou-Belka\"{\i}d, Tlemcen}
\email{m\_benalili@mail.univ-tlemcen.dz}
\author{Hafida Benallal}
\email{h\_benallal@mail.univ-tlemcen.dz}
\subjclass[2000]{Primary 58E20.}
\keywords{F-harmonic maps, Stress-energy tensor.}

\begin{abstract}
In this note, we show that some $F$-harmonic maps into spheres are global
maxima of the variations of their energy functional on the conformal group
of the sphere. Our result extends partially those obtained in \cite{15} and 
\cite{17} for harmonic and $p$-harmonic maps.
\end{abstract}

\maketitle

\section{\protect\bigskip Introduction}

Harmonic maps have been studied first by J. Eells and J.H.Sampson in the
sixties and since then many articles have appeared ( see \cite{6}, \cite{12}%
, \cite{16}, \cite{19}, \cite{20}, \cite{24}) to cite a few of them.
Extensions to the notions of $p$-harmonic, biharmonic, $F$-harmonic and $f$%
-harmonic maps were introduced and similar research has been carried out
(see \cite{1}, \cite{2}, \cite{3}, \cite{7}, \cite{15}, \cite{18}, \cite{21}%
, \cite{23}). Harmonic maps were applied to broad areas in sciences and
engineering including the robot mechanics ( see \cite{5}, \cite{8}, \cite{9}
)$.$The concept of $F$- harmonic maps unifies the notions of harmonic maps, $%
p$-harmonic maps, minimal hypersurfaces. An important tool for studying
stability of stability of $F$ harmonic maps is the stress-energy tensor.

In this paper for a $C^{2}$-function $F:\left[ 0,+\infty \right[ \rightarrow %
\left[ 0,+\infty \right[ $ such that $F^{\prime }(t)>0$ on $t\in \left]
0,+\infty \right[ $, we look for sufficient conditions which present $F$%
-harmonic maps into spheres as global maxima of the energy functional. Our
result extends similar results obtained in \cite{17} and \cite{18} for
harmonic and $p$-harmonic maps.

Let $(M,g)$ and $S^{n\text{ }}$be, respectively, a compact Riemannian
manifold of dimension $m\geq 2$ and $\ $the unit $n$-dimensional Euclidean
sphere with $n\geq 2$ endowed with the canonical metric $can$ induced by the
inner product of $R^{n+1}$.

For a $C^{1}$- application $\phi :(M,g)\longrightarrow (S^{n},can)$, we
define the $F$-energy functional by 
\begin{equation*}
E_{F}(\phi )=\int_{M}F\left( \frac{\left\vert d\phi \right\vert ^{2}}{2}%
\right) dv_{g}\text{,}
\end{equation*}%
where $\frac{\left\vert d\phi \right\vert ^{2}}{2}$ denotes the energy
density given by 
\begin{equation*}
\frac{\left\vert d\phi \right\vert ^{2}}{2}=\frac{1}{2}\sum_{i=1}^{m}\left%
\vert d\phi (e_{i})\right\vert ^{2}
\end{equation*}%
and where $\left\{ e_{i}\right\} $ is an orthonormal basis on the tangent
space $T_{x}M$ and $dv_{g}$ is the Riemannian measure associated to $g$ on $%
M $.

Let $\phi ^{-1}TS^{n}$and $\Gamma \left( \phi ^{-1}TS^{n}\right) $ be,
respectively, the pullback vector fiber bundle of $TS^{n}$ and the space of
sections on $\phi ^{-1}TS^{n}$. Denote by $\nabla ^{M}$, $\nabla ^{S^{n}}$%
and $\nabla $, respectively, the Levi-Civita connections on: $TM$, $T$ $%
S^{n} $ and $\phi ^{-1}TS^{n}$. Recall that $\nabla $ is defined by \ 
\begin{equation*}
\nabla _{X}Y=\nabla _{\phi _{\ast }X}^{S^{n}}Y
\end{equation*}%
where $X\in TM$ and $Y\in \Gamma \left( \phi ^{-1}TS^{n}\right) $.

Let $v$ be a vector field on $S^{n}$ and denote by $\left( \gamma
_{t}^{v}\right) _{t}$ the flow of diffeomorphisms induced by $v$ on $S^{n}$
i.e. 
\begin{equation*}
\gamma _{0}^{v}=id\ \text{, \ \ }\frac{d}{dt}\gamma _{t}^{v}{}_{t=0}=v\left(
\gamma _{t}^{v}\right) \text{.}
\end{equation*}%
Denote by $\phi _{t}=\gamma _{t}^{v}o\phi $ the flow generated by $v$ along
the map $\phi $. The first variation formula of $E_{F}(\phi )$ is given by%
\begin{equation*}
\frac{d}{dt}E_{F}(\phi _{t})\mid _{t=0}=\int_{M}F^{\prime }\left( \frac{%
\left\vert d\phi _{t}\right\vert ^{2}}{2}\right) \left\langle \nabla
_{\partial t}d\phi _{t},d\phi _{t}\right\rangle \left\vert _{t=0}\right.
dv_{g}
\end{equation*}%
\begin{equation*}
=-\int_{M}\left\langle v,\tau _{F}(\phi )\right\rangle dv_{g}
\end{equation*}%
where $\tau _{F}(\phi )=trace_{g}\nabla \left( F^{\prime }\left( \frac{%
\left\vert d\phi \right\vert ^{2}}{2}\right) d\phi \right) $ is the $F$%
-tension.

\begin{definition}
$\phi $ is said $F$-harmonic if and only if $\tau _{F}(\phi )=0$ i.e. $\phi $
is a critical point of $\ $the $F$-energy functional $E_{F}$.
\end{definition}

Let $v\in 
%TCIMACRO{\U{211d} }%
%BeginExpansion
\mathbb{R}
%EndExpansion
^{n+1}$ and set $\bar{v}(y)=v-\left\langle v,y\right\rangle y$ \ for any $%
y\in S^{n}$. It is known that $\bar{v}$ is a conformal vector field on $%
S^{n} $ i.e. $\left( \gamma _{t}^{v}\right) ^{\ast }can=\alpha _{t}^{2}can$
where $\left( \gamma _{t}^{v}\right) _{t}$ denotes the flow induced by the
vector field $\bar{v}$. The expression of $\alpha _{t}$ is given in \cite{17}
by 
\begin{equation}
\alpha _{t}=\frac{\left\vert v\right\vert }{\left\vert v\right\vert cht+\phi
_{v}sht}\text{.}  \label{1}
\end{equation}%
where $\phi _{v}\left( x\right) =\left\langle v,\phi \left( x\right)
\right\rangle $ and $\left\langle .,.\right\rangle $ the inner product on
the Euclidean space $%
%TCIMACRO{\U{211d} }%
%BeginExpansion
\mathbb{R}
%EndExpansion
^{n+1}$. Denote by $\pounds (\phi )$ the subspace of $\Gamma (\phi
^{-1}TS^{n})$ given by 
\begin{equation*}
\pounds (\phi )=\left\{ \bar{v}\circ \phi ,v\in 
%TCIMACRO{\U{211d} }%
%BeginExpansion
\mathbb{R}
%EndExpansion
^{n+1}\right\} \text{.}
\end{equation*}%
Obviously, if $\phi $ is not constant, $\pounds (\phi )$ is of dimension $%
n+1 $.

\section{ $F$-harmonic maps as global maxima}

For any $\overline{v}\in \pounds (\phi )$, we denote by $\left( \gamma
_{t}^{v}\right) _{t\in R}$ the one parameter group of conformal
diffeomorphisms on $S^{n}$ induced by the vector $\bar{v}$. For\ a $C^{2}$%
-function $F:\left[ 0,+\infty \right[ \rightarrow \left[ 0,+\infty \right[ $
such that $F^{\prime }(t)>0$ in $\left] 0,+\infty \right[ $.

Now we introduce the following tensor field 
\begin{equation*}
S_{g}^{F}\left( \phi \right) =F^{\prime }\left( \frac{\left\vert d\phi
\right\vert ^{2}}{2}\right) \frac{\left\vert d\phi \right\vert ^{2}}{2}g
\end{equation*}%
\begin{equation*}
-\left[ F^{\prime }\left( \frac{\left\vert d\phi \right\vert ^{2}}{2}\right)
+\frac{\left\vert d\phi \right\vert ^{2}}{2}F^{\prime \prime }\left( \frac{%
\left\vert d\phi \right\vert ^{2}}{2}\right) \right] \phi ^{\ast }can.
\end{equation*}%
For $x\in M$, we set 
\begin{equation*}
S_{g}^{o,F}(\phi )\left( x\right) =\inf \left\{ S_{g,v}^{F}(\phi )(X,X)\text{%
, }X\in T_{x}M\text{ such that }g(X,X)=1\right\} \text{.}
\end{equation*}%
The tensor field $S_{g}^{F}(\phi )\left( x\right) $ will be said positive (
resp. positive defined) at $x$ if $\ S_{g}^{o,F}(\phi )\left( x\right) \geq
0 $ (resp.$\ \ S_{g}^{o,F}(\phi )\left( x\right) >0$). The tensor field $%
S_{g}^{F}(\phi )$ will be called the $F$ stress-energy tensor of $\phi $.
The tensor field $S_{g}^{F}(\phi )$ is different from the one defined by Ara
given by $S_{F}\left( \phi \right) =F^{{}}\left( \frac{\left\vert d\phi
\right\vert ^{2}}{2}\right) g-F^{^{\prime }}\left( \frac{\left\vert d\phi
\right\vert ^{2}}{2}\right) \phi ^{\ast }can$, but $S_{g}^{F}(\phi )$ is
more suitable for our case.

\begin{example}
For $F(t)=\frac{1}{p}\left( 2t\right) ^{\frac{p}{2}}$, with $p=2$ or $p\geq 4
$, $S_{g}^{p}\left( \phi \right) $ is the stress-energy tensor introduced,
respectively, by Eells and Lemaire for $p=2$ \cite{12} and modulo a
multiplied positive constant by El Soufi for $p\geq 4$ \cite{16}, so we may
call $S_{g}^{F}(\phi )$ the stress-energy tensor of $\phi $.
\end{example}

Indeed if $F(t)=t$ then $F\prime (t)=1$, $F^{\prime \prime }(t)=0$ and%
\begin{equation*}
S_{g}^{F}\left( \phi \right) =\frac{\left\vert d\phi \right\vert ^{2}}{2}%
g-\phi ^{\ast }can.
\end{equation*}%
In the case $F(t)=\frac{1}{p}\left( 2t\right) ^{\frac{p}{2}}$, with $p\geq 4$%
, $F\prime (t)=(2t)^{\frac{p}{2}-1}$, $\ F^{\prime \prime }(t)=\left(
p-2\right) \left( 2t\right) ^{\frac{p}{2}-2}$ and%
\begin{equation*}
S_{g}^{F}\left( \phi \right) =\frac{1}{2}\left\vert d\phi \right\vert ^{p}g-%
\frac{p}{2}\left\vert d\phi \right\vert ^{p-2}\phi ^{\ast }can=\frac{p}{2}%
\left( \frac{1}{p}\left\vert d\phi \right\vert ^{p}g-\left\vert d\phi
\right\vert ^{p-2}\phi ^{\ast }can\right)
\end{equation*}%
The function $F$ is called \textit{admissible} if it satisfies 
\begin{equation*}
B=\left( \frac{F^{\prime \prime }(\alpha _{t}^{2}o\phi .\frac{\left\vert
d\phi \right\vert ^{2}}{2})}{F^{\prime }(\alpha _{t}^{2}o\phi .\frac{%
\left\vert d\phi \right\vert ^{2}}{2})}\alpha _{t}^{2}o\phi -\frac{F^{\prime
\prime }\left( \frac{\left\vert d\phi \right\vert ^{2}}{2}\right) }{%
F^{\prime }\left( \frac{\left\vert d\phi \right\vert ^{2}}{2}\right) }%
\right) \phi _{v}\geq 0
\end{equation*}%
and the $F$ stress-energy tensor $S_{g}^{F}\left( \phi \right) $ of $\phi $
fulfills 
\begin{equation*}
S_{g}^{F}\left( \gamma _{t}o\phi \right) \geq \theta \left( \alpha
_{t}^{2}o\phi \right) .S_{g}^{F}\left( \phi \right) \text{ }
\end{equation*}%
where $\theta $ is a real positive function and $\gamma _{t}$ is the one
parameter group of conformal transformations induced by the vector field $%
\overline{v}$ ( defined above ) on the euclidean sphere $S^{n}$ and $\alpha
_{t}$ is given by (\ref{1}).

\begin{example}
\end{example}

The function $F(t)=\frac{1}{p}\left( 2t\right) ^{\frac{p}{2}}$ for $p=2$ and 
$p\geq 4$ and $t\geq 0$ is admissible.

Indeed, \ for $F(t)=\frac{1}{p}\left( 2t\right) ^{\frac{p}{2}}$ we have $B=0$
and for any conformal diffeomorphism $\gamma $ on the euclidean sphere, we
have 
\begin{equation*}
S_{g}^{F}\left( \gamma o\phi \right) =\frac{1}{2}\left\vert d\left( \gamma
_{t}o\phi \right) \right\vert ^{p}g-\frac{p}{2}\left\vert d\left( \gamma
_{t}o\phi \right) \right\vert ^{p-2}\phi ^{\ast }can
\end{equation*}%
so if we let $\left\vert d\left( \gamma _{t}o\phi \right) \right\vert
^{2}=\alpha _{t}^{2}o\phi .\left\vert d\phi \right\vert ^{2}$, we get%
\begin{equation*}
S_{g}^{F}\left( \gamma _{t}o\phi \right) =\alpha _{t}^{2}o\phi .\left( \frac{%
1}{2}\left\vert d\phi \right\vert ^{p}g-\frac{p}{2}\left\vert d\phi
\right\vert ^{p-2}\phi ^{\ast }can\right)
\end{equation*}%
\begin{equation*}
=\alpha _{t}^{2}o\phi .S_{g}^{F}\left( \phi \right) \text{.}
\end{equation*}%
The $F(t)=1+at-e^{-t}$, for $t\in \left[ 0,+\infty \right[ $ where $%
a=\max_{x\in M}\frac{\left\vert d\phi \right\vert ^{2}}{2}$ is admissible
provided that the conformal diffeomorphism on the euclidean sphere $S^{n}$
is contracting that means that the function $\phi _{v}$ given in the
expression of (\ref{1}) is nonnegative and the stress-energy tensor $%
S_{g}\left( \phi \right) =\frac{\left\vert d\phi \right\vert ^{2}}{2}g-\phi
^{\ast }can$ of $\phi $ is positive.

Indeed, we have 
\begin{equation*}
B=\left( -\alpha _{t}^{2}o\phi \frac{e^{-\alpha _{t}^{2}o\phi \frac{%
\left\vert d\phi \right\vert ^{2}}{2}}}{a+e^{-\alpha _{t}^{2}o\phi \frac{%
\left\vert d\phi \right\vert ^{2}}{2}}}+\frac{e^{-\frac{\left\vert d\phi
\right\vert ^{2}}{2}}}{a+e^{-\frac{\left\vert d\phi \right\vert ^{2}}{2}}}%
\right) \phi _{v}
\end{equation*}%
Putting $u=\alpha _{t}^{2}o\phi \in \left] 0,1\right] $, we consider the
function $\varphi \left( u\right) =-$.$u\frac{e^{-u\frac{\left\vert d\phi
\right\vert ^{2}}{2}}}{a+e^{-u\frac{\left\vert d\phi \right\vert ^{2}}{2}}}+%
\frac{e^{-\frac{\left\vert d\phi \right\vert ^{2}}{2}}}{a+e^{-\frac{%
\left\vert d\phi \right\vert ^{2}}{2}}},$we get%
\begin{equation*}
\varphi ^{\prime }\left( u\right) =\left( \frac{\left\vert d\phi \right\vert
^{2}}{2}u-a-e^{-\left\vert d\phi \right\vert ^{2}u}\right) \frac{e^{-\frac{%
\left\vert d\phi \right\vert ^{2}}{2}}}{\left( a+e^{-\frac{\left\vert d\phi
\right\vert ^{2}}{2}u}\right) ^{2}}
\end{equation*}%
and it is obvious that $\varphi ^{\prime }(u)\leq 0$, hence $\varphi $ is a
decreasing function on $\left] 0,1\right] $ i.e. $\varphi (u)\geq \varphi
(1)=0$. Consequently $B\geq 0$.

Now%
\begin{equation*}
S_{g}^{F}\left( \gamma _{t}o\phi \right) \left( X;X\right) =\left( a+e^{-%
\frac{\left\vert d\left( \gamma _{t}o\phi \right) \right\vert ^{2}}{2}%
}\right) \frac{\left\vert d\left( \gamma _{t}o\phi \right) \right\vert ^{2}}{%
2}g\left( X,X\right)
\end{equation*}%
\begin{equation*}
-\left[ \left( a+e^{-\frac{\left\vert d\left( \gamma _{t}o\phi \right)
\right\vert ^{2}}{2}}\right) -\frac{\left\vert d\left( \gamma _{t}o\phi
\right) \right\vert ^{2}}{2}e^{-\frac{\left\vert d\left( \gamma _{t}o\phi
\right) \right\vert ^{2}}{2}}\right] \left( \gamma _{t}o\phi \right) ^{\ast
}can\left( X,X\right) .
\end{equation*}%
\begin{equation*}
=\alpha _{t}^{2}o\phi \left( a+e^{-\alpha _{t}^{2}o\phi \frac{\left\vert
d\phi \right\vert ^{2}}{2}}\right) \frac{\left\vert d\phi \right\vert ^{2}}{2%
}g\left( X,X\right)
\end{equation*}%
\begin{equation*}
-\alpha _{t}^{2}o\phi \left[ \left( a+e^{-\alpha _{t}^{2}o\phi \frac{%
\left\vert d\phi \right\vert ^{2}}{2}}\right) -\alpha _{t}^{2}o\phi \frac{%
\left\vert d\phi \right\vert ^{2}}{2}e^{-\alpha _{t}^{2}o\phi \frac{%
\left\vert d\phi \right\vert ^{2}}{2}}\right] \phi ^{\ast }can\left(
X,X\right)
\end{equation*}%
\begin{equation*}
=\alpha _{t}^{2}o\phi \left( a+e^{-\frac{\left\vert d\left( \gamma _{t}o\phi
\right) \right\vert ^{2}}{2}}\right) \left[ \frac{1}{2}\left\vert d\phi
\right\vert ^{2}g\left( X,X\right) -\phi ^{\ast }can\left( X,X\right) \right]
\end{equation*}%
\begin{equation*}
+\alpha _{t}^{2}o\phi \frac{\left\vert d\phi \right\vert ^{2}}{2}e^{-\alpha
_{t}^{2}o\phi \frac{\left\vert d\phi \right\vert ^{2}}{2}}\phi ^{\ast
}can\left( X,X\right) .
\end{equation*}%
An other example is the following function $F(t)=\left( 1+2t\right) ^{\alpha
}$ where $0<\alpha <1$, the $F$-energy is the $\alpha $-energy of
Sacks-Uhlenbeck ( see \cite{22}). In fact 
\begin{equation*}
B=\left( \alpha -1\right) \left( \frac{1}{1+\alpha _{t}^{2}o\phi .\left\vert
d\phi \right\vert ^{2}}\alpha _{t}^{2}o\phi -\frac{1}{1+\left\vert d\phi
\right\vert ^{2}}\right) \phi _{v}
\end{equation*}%
\begin{equation*}
=\frac{\left( \alpha -1\right) \left( \alpha _{t}^{2}o\phi -1\right) }{%
\left( 1+\alpha _{t}^{2}o\phi .\left\vert d\phi \right\vert ^{2}\right)
\left( 1+\left\vert d\phi \right\vert ^{2}\right) }\phi _{v}\geq 0
\end{equation*}%
provided that $\phi _{v}\geq 0$.

And for vector field $X$ on $M$, we have 
\begin{equation*}
S_{g}^{F}\left( \gamma _{t}o\phi \right) \left( X;X\right) =2\alpha \left(
1+\alpha _{t}^{2}o\phi \left\vert d\phi \right\vert ^{2}\right) ^{\alpha
-1}\alpha _{t}^{2}o\phi \frac{\left\vert d\phi \right\vert ^{2}}{2}g\left(
X,X\right) -
\end{equation*}%
\begin{equation*}
\left[ 2\alpha \left( 1+\alpha _{t}^{2}o\phi \left\vert d\phi \right\vert
^{2}\right) ^{\alpha -1}+2\alpha \left( \alpha -1\right) \left( 1+\alpha
_{t}^{2}o\phi \left\vert d\phi \right\vert ^{2}\right) ^{\alpha -2}\alpha
_{t}^{2}o\phi .\left\vert d\phi \right\vert ^{2}\right] \alpha _{t}^{2}o\phi
.\phi ^{\ast }can\left( X,X\right)
\end{equation*}%
\begin{equation*}
=\left[ 2\alpha \left( 1+\alpha _{t}^{2}o\phi \left\vert d\phi \right\vert
^{2}\right) ^{\alpha -1}\alpha _{t}^{2}o\phi \left( \frac{\left\vert d\phi
\right\vert ^{2}}{2}g\left( X,X\right) -\phi ^{\ast }can\left( X,X\right)
\right) \right. +
\end{equation*}%
\begin{equation*}
\left. 2\alpha \left( 1-\alpha \right) \left( 1+\alpha _{t}^{2}o\phi
\left\vert d\phi \right\vert ^{2}\right) ^{\alpha -2}\alpha _{t}^{2}o\phi
\left\vert d\phi \right\vert ^{2}.\phi ^{\ast }can\left( X,X\right) \right]
\end{equation*}%
and taking account of the positivity of the stress-energy tensor of $%
S_{g}\left( \phi \right) =\frac{1}{2}\left\vert d\phi \right\vert ^{2}g-\phi
^{\ast }can$ and the fact that $\phi _{v}\geq 0$, we infer that 
\begin{equation*}
S_{g}^{F}\left( \gamma _{t}o\phi \right) \left( X,X\right) \geq \alpha
_{t}^{4}o\phi .S_{g}^{F}\phi \left( X,X\right) \text{.}
\end{equation*}

\begin{remark}
$\phi _{v}\geq 0$ occurs for example if $\phi (M)$ is included in the
positive half-sphere $S^{n+}=\left\{ x\in S^{n}:\left\langle
x,v\right\rangle \geq 0\right\} $.
\end{remark}

In this section we state the following result

\begin{theorem}
\label{th2} Let $F:\left[ 0,+\infty \right[ \rightarrow \left[ 0,+\infty %
\right[ $ be an admissible function and $\phi $ be an $F$-harmonic map from
a compact $m$-Riemannian manifold $\left( M,g\right) $ ($m\geq 2$) into the
Euclidean sphere $S^{n}$ ($n\geq 2$). Suppose that the $F$ stress- energy
tensor $S_{g}^{F}\left( \phi \right) $ is positive ( resp. positively
defined). Then for any conformal diffeomorphism $\gamma $ on $S^{n}$, $%
E_{F}\left( \gamma o\phi \right) \leq E_{F}\left( \phi \right) $ ( resp. $%
E_{F}\left( \gamma o\phi \right) <E_{F}\left( \phi \right) $ ).
\end{theorem}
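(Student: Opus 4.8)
The plan is to reduce the statement to the behaviour of $E_F$ along the one-parameter groups $(\gamma_t)_{t\in\mathbb{R}}$ and then to show that each such curve is maximised at $t=0$. First I would use that the conformal group of $(S^n,can)$ is generated by the isometry group $O(n+1)$ together with the flows $\gamma_t$ associated with the fields $\bar v$. Since an isometry $\rho$ of $S^n$ satisfies $|d(\rho\circ\phi)|^2=|d\phi|^2$, it leaves the energy density, hence $E_F$, unchanged; therefore it suffices to prove $E_F(\gamma_t\circ\phi)\le E_F(\phi)$ for every $v\in\mathbb{R}^{n+1}$ and every $t$. Writing $\phi_t=\gamma_t\circ\phi$ and $h(t)=E_F(\phi_t)$, the goal becomes: $t=0$ is a global maximum of $h$, and a strict one in the positively defined case.

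Next I would record the conformal rescaling of the pulled-back metric. Since $(\gamma_t)^{*}can=\alpha_t^{2}can$, one has $\phi_t^{*}can=(\alpha_t^{2}\circ\phi)\,\phi^{*}can$ and hence $\tfrac{|d\phi_t|^2}{2}=(\alpha_t^{2}\circ\phi)\tfrac{|d\phi|^2}{2}$, so that $h(t)=\int_M F\big((\alpha_t^{2}\circ\phi)\tfrac{|d\phi|^2}{2}\big)\,dv_g$. Differentiating at $t=0$ and using the first variation formula together with $\tau_F(\phi)=0$ gives $h'(0)=-\int_M\langle\bar v\circ\phi,\tau_F(\phi)\rangle\,dv_g=0$, so that $t=0$ is a critical point of $h$; the same formula applied at time $t$ yields $h'(t)=-\int_M\langle\bar v\circ\phi_t,\tau_F(\phi_t)\rangle\,dv_g$. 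This is the only place where the $F$-harmonicity of $\phi$ is used.

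The heart of the argument is to control $h$ for \emph{all} $t$ by means of the $F$ stress-energy tensor. I would differentiate $h$ and convert the result, through the divergence formula for $S_g^F$ on the compact manifold $M$, into an expression governed by $S_g^F(\phi_t)$; here one exploits that $\bar v$ is the gradient on $S^n$ of the height function $y\mapsto\langle v,y\rangle$, whose Hessian on $S^n$ equals $-\langle v,y\rangle\,can$, which is exactly what produces the conformal factor $\alpha_t^{2}\circ\phi$ and the quantity $B$ in the integrand. The admissibility hypothesis then does the work in two steps: the inequality $S_g^F(\gamma_t\circ\phi)\ge\theta(\alpha_t^{2}\circ\phi)\,S_g^F(\phi)$ combined with $S_g^F(\phi)\ge0$ keeps the stress-energy tensor positive along the entire flow, while $B\ge0$ absorbs the term coming from the nonlinearity $F''/F'$. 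Together they should force $t\,h'(t)\le0$, i.e. $h$ increases on $(-\infty,0]$ and decreases on $[0,+\infty)$, whence $h(t)\le h(0)$. When $S_g^F(\phi)$ is positively defined the same estimate is strict for $t\neq0$, giving the strict conclusion.

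The main obstacle is this middle step: producing the precise identity that expresses $h'(t)$, equivalently $\langle\bar v\circ\phi_t,\tau_F(\phi_t)\rangle$, in terms of $S_g^F(\phi_t)$, and then checking that the two admissibility terms enter with compatible signs for every $t$. In the model case where the conformal diffeomorphism is contracting ($\phi_v\ge0$), a direct computation shows the conformal factor $\alpha_t^{2}\circ\phi$ is decreasing for $t>0$ and hence $h'(t)\le0$ there; but for $t<0$ the sign of its derivative is not definite, and it is precisely here that the comparison inequality for $S_g^F$ along the flow, rather than a naive monotonicity of $\alpha_t$, must be invoked. A secondary technical point is ruling out $h'(t)=0$ for $t\neq0$ in the positively defined case, which uses that $\phi$ is non-constant, so that the energy density does not vanish identically.
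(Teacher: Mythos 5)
Your skeleton matches the paper's: reduce to the one\-parameter groups via the decomposition $\gamma =r\circ \gamma _{t}^{v}$ with $r\in O(n+1)$ an isometry (which preserves $E_{F}$), set $h(t)=E_{F}\left( \gamma _{t}^{v}\circ \phi \right) $, and try to show that $h^{\prime }(t)$ has the sign of $-\sinh t$ using the two admissibility conditions together with the positivity of $S_{g}^{F}$. But the step you yourself defer as ``the main obstacle'' --- converting $h^{\prime }(t)=-\int_{M}\left\langle \tau _{F}\left( \gamma _{t}^{v}\circ \phi \right) ,\bar{v}\circ \left( \gamma _{t}^{v}\circ \phi \right) \right\rangle dv_{g}$ into an expression controlled by $S_{g}^{F}\left( \gamma _{t}\circ \phi \right) $ and by $B$ --- \emph{is} the mathematical content of the theorem, and you do not supply it. In the paper it occupies Lemmas \ref{lem1}, \ref{lem2} and \ref{lem3}: first $\tau _{F}\left( \gamma \circ \phi \right) $ is computed through the conformal change of connection, producing the extra term involving $\nabla f$ with $f=F^{\prime }\left( \alpha ^{2}\circ \phi \,\frac{\left\vert d\phi \right\vert ^{2}}{2}\right) /F^{\prime }\left( \frac{\left\vert d\phi \right\vert ^{2}}{2}\right) $; then $\nabla \alpha _{t}=-\frac{\alpha _{t}^{2}}{\left\vert v\right\vert }\sinh t\,\bar{v}$ is inserted; then $\nabla f_{t}$ is computed and the pointwise identity $\left\langle d\phi \left( \nabla \frac{\left\vert d\phi \right\vert ^{2}}{2}\right) ,\bar{v}\circ \phi \right\rangle =\frac{\phi _{v}}{\left\vert v\right\vert }\left\vert d\phi \right\vert ^{2}$ is proved. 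Only after all of this does the derivative split as $h^{\prime }(t)=\varphi (t)+\chi (t)$, where $\chi (t)\leq 0$ because $B\geq 0$ and $F^{\prime }>0$, and where (\ref{5}) exhibits $\varphi (t)$ as $-\frac{2\sinh t}{\left\vert v\right\vert }$ times an integral of a contraction of $S_{g}^{F}\left( \gamma _{t}^{v}\circ \phi \right) $, nonnegative because $S_{g}^{F}\left( \gamma _{t}\circ \phi \right) \geq \theta \left( \alpha _{t}^{2}\circ \phi \right) S_{g}^{F}\left( \phi \right) \geq 0$. Asserting that the hypotheses ``should force $t\,h^{\prime }(t)\leq 0$'' is a restatement of the goal, not an argument.

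Moreover, the mechanism you sketch for the missing step is doubtful. You invoke ``the divergence formula for $S_{g}^{F}$,'' but no conservation law is stated or used for this tensor: the tensor for which such identities are known is Ara's $S_{F}\left( \phi \right) =F\left( \frac{\left\vert d\phi \right\vert ^{2}}{2}\right) g-F^{\prime }\left( \frac{\left\vert d\phi \right\vert ^{2}}{2}\right) \phi ^{\ast }can$, and the paper explicitly distinguishes $S_{g}^{F}$ from it; the actual proof works entirely at the level of the tension field of $\gamma _{t}\circ \phi $, never through a divergence identity. Two further inaccuracies: $F$-harmonicity is not used ``only'' to get $h^{\prime }(0)=0$ --- it is used at \emph{every} $t$, to delete the term $f\,d\gamma \left( \tau _{F}\left( \phi \right) \right) $ appearing in Lemma \ref{lem1} (and $h^{\prime }(0)=0$ alone gives nothing global). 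And the sign of $\varphi (t)$ is not a monotonicity statement about $\alpha _{t}$, even in the contracting case: it comes from comparing, inside (\ref{5}), the coefficient of $\left\vert d\phi _{v}\right\vert ^{2}$ with that of $\frac{\left\vert d\phi \right\vert ^{2}}{2}\left\vert \bar{v}\circ \phi \right\vert ^{2}$, which is precisely where positivity of the stress\-energy tensor enters, via the duality $\sum_{j}\left\langle W,d\phi (e_{j})\right\rangle ^{2}\leq \mu _{\max }\left\vert W\right\vert ^{2}$ applied to $W=\bar{v}\circ \phi $, $\mu _{\max }$ being the largest eigenvalue of $\phi ^{\ast }can$ with respect to $g$. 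Until the identity (\ref{5}) (or an equivalent of Lemmas \ref{lem1}--\ref{lem3}) is actually derived, the proposal remains a plan rather than a proof.
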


\begin{remark}
In case $F(t)=\frac{1}{p}\left( 2t\right) ^{\frac{p}{2}}$, $p=2$ or $p\geq 4$
the condition $\phi _{v}\geq 0$ is not needed since $B=0$, so our result
recover the ones by El-Soufi in \cite{16} and \cite{18}.
\end{remark}

To prove Theorem \ref{th2}, we need the following lemmas

\begin{lemma}
\label{lem1} Let $\phi :(M,g)\rightarrow \left( N,h\right) $ be a smooth map
and $\gamma $ be a conformal diffeomorphism on $N$, then the $F$-tension of
the map $\gamma o\phi $, is given by%
\begin{equation*}
\tau _{F}\left( \gamma o\phi \right) =2\alpha ^{-1}o\phi .F^{\prime }(\alpha
^{2}o\phi \frac{\left\vert d\phi \right\vert ^{2}}{2})d\gamma \left( d\phi
_{v}-\frac{\left\vert d\phi \right\vert ^{2}}{2}\nabla \alpha o\phi \right)
\end{equation*}%
\begin{equation*}
+fd\gamma \left( \tau _{F}\left( \phi \right) \right) +d\gamma \left(
F^{\prime }\left( \frac{\left\vert d\phi \right\vert ^{2}}{2}\right) d\phi
\left( \nabla f\right) \right) \text{.}
\end{equation*}%
where $f=\frac{F^{\prime }\left( \alpha ^{2}o\phi \frac{\left\vert d\phi
\right\vert ^{2}}{2}\right) }{F^{\prime }\left( \frac{\left\vert d\phi
\right\vert ^{2}}{2}\right) }$ and $\gamma ^{\ast }can=\alpha ^{2}can$.
\end{lemma}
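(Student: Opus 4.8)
The plan is to reduce everything to two standard ingredients: the behaviour of the energy density under the conformal factor, and the chain rule for second fundamental forms. First I would record that $d(\gamma\circ\phi)=d\gamma\circ d\phi$ and that conformality $\gamma^\ast h=\alpha^2 h$ forces $|d(\gamma\circ\phi)(X)|^2=\alpha^2\circ\phi\,|d\phi(X)|^2$, whence the energy density of $\gamma\circ\phi$ equals $\alpha^2\circ\phi\,\frac{|d\phi|^2}{2}$. The decisive algebraic observation is the factorisation
\begin{equation*}
F'\!\left(\alpha^2\circ\phi\,\tfrac{|d\phi|^2}{2}\right) d(\gamma\circ\phi)=f\,d\gamma\!\left(F'\!\left(\tfrac{|d\phi|^2}{2}\right) d\phi\right),\qquad f=\frac{F'\!\left(\alpha^2\circ\phi\,\frac{|d\phi|^2}{2}\right)}{F'\!\left(\frac{|d\phi|^2}{2}\right)},
\end{equation*}
which turns the $F$-tension $\tau_F(\gamma\circ\phi)=\operatorname{trace}_g\nabla\big(F'(\cdots)\,d(\gamma\circ\phi)\big)$ into the trace of a Leibniz product.

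Applying $\operatorname{trace}_g\nabla$ in a local frame geodesic at the base point and using the product rule, the derivative that falls on the scalar $f$ produces $d\gamma\big(F'(\frac{|d\phi|^2}{2})\,d\phi(\nabla f)\big)$, which is exactly the last term of the claimed formula. For the remaining piece $f\sum_i\nabla_{e_i}\big(d\gamma(F'(\frac{|d\phi|^2}{2})d\phi)\big)(e_i)$ I would invoke the chain rule for the pullback connection, $\nabla^{\gamma\circ\phi}_X\big(d\gamma(W)\big)=d\gamma(\nabla^{\phi}_X W)+(\nabla d\gamma)(d\phi(X),W)$ for $W\in\Gamma(\phi^{-1}TN)$. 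Tracing, the first summand assembles into $f\,d\gamma(\tau_F(\phi))$, the middle term, while the second leaves $f\,F'(\frac{|d\phi|^2}{2})\sum_i(\nabla d\gamma)(d\phi(e_i),d\phi(e_i))$.

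The heart of the proof is therefore the second fundamental form of the conformal map $\gamma$. Here I would use that $\gamma\colon(N,\alpha^2 h)\to(N,h)$ is an isometry, so that $\nabla^{\gamma\circ\phi}_X d\gamma(Y)=d\gamma(\bar\nabla_X Y)$ with $\bar\nabla$ the Levi-Civita connection of $\alpha^2 h$; subtracting the connection of $h$ and inserting the classical conformal-change formula gives
\begin{equation*}
(\nabla d\gamma)(X,Y)=d\gamma\Big(X(\ln\alpha)\,Y+Y(\ln\alpha)\,X-h(X,Y)\,\nabla\ln\alpha\Big).
\end{equation*}
Setting $X=Y=d\phi(e_i)$, summing over $i$, and using $\sum_i d\phi(e_i)(\ln\alpha)\,d\phi(e_i)=d\phi\big(\nabla((\ln\alpha)\circ\phi)\big)$ together with $\nabla\ln\alpha=\alpha^{-1}\nabla\alpha$ (so that $d\phi_v$ stands for $d\phi(\nabla(\alpha\circ\phi))$), the factor $f\,F'(\frac{|d\phi|^2}{2})=F'(\alpha^2\circ\phi\,\frac{|d\phi|^2}{2})$ recombines and I obtain the first term $2\alpha^{-1}\circ\phi\,F'(\alpha^2\circ\phi\,\frac{|d\phi|^2}{2})\,d\gamma\big(d\phi_v-\frac{|d\phi|^2}{2}\nabla\alpha\circ\phi\big)$. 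Collecting the three contributions yields the stated identity.

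I expect the main obstacle to be purely the connection bookkeeping: one must keep the pullback connections along $\phi$ and along $\gamma\circ\phi$ distinct, apply the composition rule for $\nabla d\gamma$ with the correct arguments, and derive the conformal second–fundamental–form formula carefully, since both the sign and the coefficient $2$ in front of $\alpha^{-1}$ originate there. Once these are in place, matching the three groups of terms is routine algebra with no further analytic input.
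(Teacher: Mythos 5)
Your proof is correct and takes essentially the same route as the paper: both arguments rest on the conformal scaling $\frac{\left\vert d\left( \gamma \circ \phi \right) \right\vert ^{2}}{2}=\alpha ^{2}\circ \phi \,\frac{\left\vert d\phi \right\vert ^{2}}{2}$, a Leibniz-rule splitting that isolates the $\nabla f$ term, the composition rule involving $\nabla d\gamma $, and the computation of $\nabla d\gamma $ via the isometry $\gamma :\left( N,\alpha ^{2}h\right) \rightarrow \left( N,h\right) $ combined with the conformal change of the Levi-Civita connection. The only difference is bookkeeping: you factor $F^{\prime }\left( \alpha ^{2}\circ \phi \,\frac{\left\vert d\phi \right\vert ^{2}}{2}\right) d\left( \gamma \circ \phi \right) =f\,d\gamma \left( F^{\prime }\left( \frac{\left\vert d\phi \right\vert ^{2}}{2}\right) d\phi \right) $ at the outset so that $\tau _{F}\left( \phi \right) $ and $\nabla f$ appear directly, whereas the paper first passes through the ordinary tension $\tau \left( \gamma \circ \phi \right) $ and then re-expresses $\tau \left( \phi \right) $ in terms of $\tau _{F}\left( \phi \right) $.
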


\begin{proof}
We follow closely the proof in \cite{18}%
\begin{equation*}
\tau _{F}\left( \gamma o\phi \right) =trace_{g}\nabla \left( F^{\prime
}\left( \frac{\left\vert d\left( \gamma o\phi \right) \right\vert ^{2}}{2}%
\right) d\left( \gamma o\phi \right) \right) =F^{\prime }\left( \frac{%
\left\vert d\left( \gamma o\phi \right) \right\vert ^{2}}{2}\right)
trace\left( \nabla d\left( \gamma o\phi \right) \right)
\end{equation*}%
\begin{equation*}
+d\left( \gamma o\phi \right) \left( \nabla F^{\prime }\left( \frac{%
\left\vert d\left( \gamma o\phi \right) \right\vert ^{2}}{2}\right) \right)
\end{equation*}%
where $\nabla F^{\prime }\left( \frac{\left\vert d\left( \gamma o\phi
\right) \right\vert ^{2}}{2}\right) $ is the gradient of $F^{\prime }\left( 
\frac{\left\vert d\left( \gamma o\phi \right) \right\vert ^{2}}{2}\right) $
in $M$.

Since $\gamma $ is a conformal diffeomorphism on $S^{n}$, we have 
\begin{equation*}
\tau _{F}\left( \gamma o\phi \right) =F^{\prime }\left( \alpha ^{2}o\phi 
\frac{\left\vert d\phi \right\vert ^{2}}{2}\right) \tau \left( \gamma o\phi
\right) +d\left( \gamma o\phi \right) \left( \nabla F^{\prime }\left( \alpha
^{2}o\phi \frac{\left\vert d\phi \right\vert ^{2}}{2}\right) \right)
\end{equation*}%
\begin{equation*}
=F^{\prime }\left( \alpha ^{2}o\phi \frac{\left\vert d\phi \right\vert ^{2}}{%
2}\right) \left( trace_{g}\nabla ^{\gamma }d\gamma \left( d\phi ,d\phi
\right) +d\gamma .\tau \left( \phi \right) \right) +d\left( \gamma o\phi
\right) \left( \nabla F^{\prime }\left( \alpha ^{2}o\phi \frac{\left\vert
d\phi \right\vert ^{2}}{2}\right) \right)
\end{equation*}%
\begin{equation*}
=F^{\prime }\left( \alpha ^{2}o\phi \frac{\left\vert d\phi \right\vert ^{2}}{%
2}\right) \left( trace_{g}\nabla ^{\gamma }d\gamma \left( d\phi ,d\phi
\right) +\frac{1}{F^{\prime }\left( \frac{\left\vert d\phi \right\vert ^{2}}{%
2}\right) }d\gamma \left( \tau _{F}\left( \phi \right) \right) \right)
\end{equation*}%
\begin{equation*}
-\frac{F^{\prime }(\alpha ^{2}o\phi \frac{\left\vert d\phi \right\vert ^{2}}{%
2})}{F^{\prime }\left( \frac{\left\vert d\phi \right\vert ^{2}}{2}\right) }%
d\left( \gamma o\phi \right) \left( \nabla F^{\prime }\left( \frac{%
\left\vert d\phi \right\vert ^{2}}{2}\right) \right) +d\left( \gamma o\phi
\right) \left( \nabla F^{\prime }\left( \alpha ^{2}o\phi \frac{\left\vert
d\phi \right\vert ^{2}}{2}\right) \right) \text{.}
\end{equation*}%
Putting $\ f=\frac{F^{\prime }(\alpha ^{2}o\phi \frac{\left\vert d\phi
\right\vert ^{2}}{2})}{F^{\prime }\left( \frac{\left\vert d\phi \right\vert
^{2}}{2}\right) }$ we get 
\begin{equation*}
\tau _{F}\left( \gamma o\phi \right) =F^{\prime }(\alpha ^{2}o\phi \frac{%
\left\vert d\phi \right\vert ^{2}}{2})trace_{g}\nabla ^{\gamma }d\gamma
\left( d\phi ,d\phi \right) +fd\gamma \left( \tau _{F}\left( \phi \right)
\right)
\end{equation*}%
\begin{equation*}
+F^{\prime }\left( \frac{\left\vert d\phi \right\vert ^{2}}{2}\right)
d\left( \gamma o\phi \right) \left( \nabla f\right) \text{.}
\end{equation*}

Now since $\gamma :\left( N,\gamma ^{\ast }can\right) \rightarrow \left(
N,can\right) $ is an isometry then, if $\ \widetilde{\nabla }$ denotes the
connection corresponding to $\gamma ^{\ast }can$, we have 
\begin{equation*}
\nabla ^{\gamma }d\gamma (X,Y)=d\gamma \left( \widetilde{\nabla }%
_{X}Y-\nabla _{X}Y\right)
\end{equation*}%
and since ( see \cite{18}) 
\begin{equation*}
\widetilde{\nabla }_{X}Y-\nabla _{X}Y=\alpha ^{-1}\left( \left\langle
X,\nabla \alpha \right\rangle Y+\left\langle Y,\nabla \alpha \right\rangle
X-\left\langle X,Y\right\rangle \nabla \alpha \right)
\end{equation*}%
we obtain%
\begin{equation*}
trace_{g}\nabla ^{\gamma }d\gamma \left( d\phi ,d\phi \right) =2\alpha
^{-1}o\phi .d\gamma \left( d\phi \left( \nabla \alpha o\phi \right) -\frac{%
\left\vert d\phi \right\vert ^{2}}{2}\nabla \alpha o\phi \right) \text{.}
\end{equation*}

Finally we infer that%
\begin{equation*}
\tau _{F}\left( \gamma o\phi \right) =2\alpha ^{-1}o\phi .F^{\prime }(\alpha
^{2}o\phi \frac{\left\vert d\phi \right\vert ^{2}}{2})d\gamma \left( d\phi
\left( \nabla \alpha o\phi \right) -\frac{\left\vert d\phi \right\vert ^{2}}{%
2}\nabla \alpha o\phi \right)
\end{equation*}%
\begin{equation*}
+fd\gamma \left( \tau _{F}\left( \phi \right) \right) +F^{\prime }\left( 
\frac{\left\vert d\phi \right\vert ^{2}}{2}\right) d\gamma od\phi \left(
\nabla f\right) \text{.}
\end{equation*}
\end{proof}

\begin{lemma}
\label{lem2} Let $\phi $ be an $F$-harmonic map from an $m$-dimensional
Riemannian manifold $\left( M,g\right) $ ($m\geq 2$) into the Euclidean unit
sphere $\left( S^{n},can\right) $ ($n\geq 2$).

Then for any $v\in R^{n+1}-\left\{ 0\right\} $ and any $t_{o}\in R$ we have%
\begin{equation*}
\frac{d}{dt}E_{F}\left( \gamma _{t}^{v}o\phi \right) _{t=t_{o}}=
\end{equation*}%
\begin{equation*}
-2\frac{sht_{o}}{\left\vert v\right\vert }\int_{M}\alpha _{t_{o}}^{3}o\phi
.F^{\prime }\left( \alpha _{t_{o}}^{2}o\phi .\frac{\left\vert d\phi
\right\vert ^{2}}{2}\right) \left( \left\vert d\phi \right\vert
^{2}\left\vert \overline{v}o\phi \right\vert ^{2}-\left\vert d\phi
_{v}\right\vert ^{2}\right) dv_{g}
\end{equation*}%
\begin{equation*}
-\int_{M}\alpha _{t_{o}}^{2}o\phi .F^{\prime }\left( \frac{\left\vert d\phi
\right\vert ^{2}}{2}\right) \left\langle d\phi \left( \nabla
f_{t_{o}}\right) ,\overline{v}o\phi \right\rangle dv_{g}
\end{equation*}%
where $f_{t_{o}}=\frac{F^{\prime }\left( \alpha _{t_{o}}^{2}o\phi \frac{%
\left\vert d\phi \right\vert ^{2}}{2}\right) }{F^{\prime }\left( \frac{%
\left\vert d\phi \right\vert ^{2}}{2}\right) }$ , $\gamma ^{\ast }can=\alpha
_{t_{o}}^{2}can$

and 
\begin{equation*}
\alpha _{t_{o}}=\frac{\left\vert v\right\vert }{\phi _{v}sht_{o}+\left\vert
v\right\vert cht_{o}}\text{.}
\end{equation*}
\end{lemma}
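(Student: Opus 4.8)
The plan is to evaluate the derivative at a general time $t_{0}$ by reducing it, through the group property of the flow, to a \emph{first} variation of $E_{F}$ taken at the map $\gamma_{t_{0}}^{v}\circ\phi$, and then to feed in the expression for the $F$-tension supplied by Lemma \ref{lem1}. First I would use $\gamma_{t_{0}+s}^{v}=\gamma_{s}^{v}\circ\gamma_{t_{0}}^{v}$ to write $\gamma_{t_{0}+s}^{v}\circ\phi=\gamma_{s}^{v}\circ(\gamma_{t_{0}}^{v}\circ\phi)$, so that near $t_{0}$ the family is the flow of $\bar{v}$ issuing from $\psi:=\gamma_{t_{0}}^{v}\circ\phi$. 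Its variation field at $s=0$ is $\bar{v}$ viewed as a section of $\psi^{-1}TS^{n}$, namely $x\mapsto\bar{v}(\gamma_{t_{0}}^{v}(\phi(x)))$, whence the first variation formula of the Introduction gives
\begin{equation*}
\frac{d}{dt}E_{F}(\gamma_{t}^{v}\circ\phi)_{t=t_{0}}=-\int_{M}\langle\bar{v},\tau_{F}(\gamma_{t_{0}}^{v}\circ\phi)\rangle\,dv_{g}.
\end{equation*}
Substituting $\gamma=\gamma_{t_{0}}^{v}$ into Lemma \ref{lem1} and using that $\phi$ is $F$-harmonic, so $\tau_{F}(\phi)=0$, kills the second term $f\,d\gamma_{t_{0}}(\tau_{F}(\phi))$ and leaves only the first and third terms of Lemma \ref{lem1}.

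The decisive simplification is that $\bar{v}$ is invariant under its own flow, i.e.\ $d\gamma_{t_{0}}^{v}(\bar{v}(y))=\bar{v}(\gamma_{t_{0}}^{v}(y))$; combined with the conformality $(\gamma_{t_{0}}^{v})^{\ast}can=\alpha_{t_{0}}^{2}can$ this yields, for every $W\in T_{\phi(x)}S^{n}$,
\begin{equation*}
\langle\bar{v},d\gamma_{t_{0}}^{v}(W)\rangle=\langle d\gamma_{t_{0}}^{v}(\bar{v}\circ\phi),d\gamma_{t_{0}}^{v}(W)\rangle=\alpha_{t_{0}}^{2}\circ\phi\,\langle\bar{v}\circ\phi,W\rangle.
\end{equation*}
Applying this to both surviving terms strips off every $d\gamma_{t_{0}}^{v}$ and generates the correct powers of $\alpha_{t_{0}}\circ\phi$ (a factor $\alpha_{t_{0}}^{-1}\cdot\alpha_{t_{0}}^{2}=\alpha_{t_{0}}$ in the first term, a factor $\alpha_{t_{0}}^{2}$ in the third). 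The third term then becomes exactly the integral $-\int_{M}\alpha_{t_{0}}^{2}\circ\phi\,F'(\frac{|d\phi|^{2}}{2})\langle d\phi(\nabla f_{t_{0}}),\bar{v}\circ\phi\rangle\,dv_{g}$, with $\nabla f_{t_{0}}$ left unexpanded.

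To finish the first term I would differentiate $(\ref{1})$: since $y\mapsto\langle v,y\rangle$ has sphere-gradient $\bar{v}$, one obtains $\nabla^{S^{n}}\alpha_{t_{0}}=-\frac{\sinh t_{0}}{|v|}\alpha_{t_{0}}^{2}\,\bar{v}$, hence $(\nabla\alpha_{t_{0}})\circ\phi=-\frac{\sinh t_{0}}{|v|}(\alpha_{t_{0}}\circ\phi)^{2}\,\bar{v}\circ\phi$ and, for the gradient on $M$, $\nabla^{M}(\alpha_{t_{0}}\circ\phi)=-\frac{\sinh t_{0}}{|v|}(\alpha_{t_{0}}\circ\phi)^{2}\nabla^{M}\phi_{v}$. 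Using $\langle\bar{v}\circ\phi,d\phi(e_{i})\rangle=\langle v,d\phi(e_{i})\rangle=d\phi_{v}(e_{i})$ one reads off $\langle\bar{v}\circ\phi,d\phi(\nabla^{M}\phi_{v})\rangle=|d\phi_{v}|^{2}$ and $\langle\bar{v}\circ\phi,\bar{v}\circ\phi\rangle=|\bar{v}\circ\phi|^{2}$, and collecting the two pieces of the first term produces the remaining integral, weighted by $\alpha_{t_{0}}^{3}\circ\phi$ and by $F'(\alpha_{t_{0}}^{2}\circ\phi\,\frac{|d\phi|^{2}}{2})$, in the combination $\tfrac{|d\phi|^{2}}{2}|\bar{v}\circ\phi|^{2}-|d\phi_{v}|^{2}$.

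The hard part will be purely organizational rather than conceptual: one must keep straight which gradient enters each slot of Lemma \ref{lem1} — the gradient on $M$ of $\alpha_{t_{0}}\circ\phi$ in the argument of $d\phi$, against the sphere-gradient $(\nabla^{S^{n}}\alpha_{t_{0}})\circ\phi$ in the pointwise term — transport $\bar{v}$ through $d\gamma_{t_{0}}^{v}$ with the correct weight $\alpha_{t_{0}}^{2}$, and carefully track the factors of $\tfrac12$ carried by the energy density $\frac{|d\phi|^{2}}{2}$; it is precisely this last bookkeeping that pins down the coefficient of the $|d\phi|^{2}|\bar{v}\circ\phi|^{2}$ term in the stated identity.
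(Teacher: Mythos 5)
Your proposal is correct and takes essentially the same route as the paper's own proof: reduce to the first variation formula applied at $\gamma_{t_{0}}^{v}\circ\phi$, insert Lemma \ref{lem1} with $\tau_{F}(\phi)=0$, strip the $d\gamma_{t_{0}}^{v}$'s using flow-invariance of $\bar{v}$ together with conformality (producing the weights $\alpha_{t_{0}}$ and $\alpha_{t_{0}}^{2}$), and finish with the gradient formula $\nabla\alpha_{t_{0}}=-\frac{\sinh t_{0}}{|v|}\alpha_{t_{0}}^{2}\bar{v}$, which the paper cites from \cite{18} and you derive directly. Your final combination $\frac{|d\phi|^{2}}{2}|\bar{v}\circ\phi|^{2}-|d\phi_{v}|^{2}$ agrees with the formula obtained at the end of the paper's proof; the missing factor $\frac{1}{2}$ in the lemma's displayed statement is an inconsistency in the paper itself, not in your argument.
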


\begin{proof}
Recall that the first variation formula of the $F$-energy is given by%
\begin{equation*}
\frac{d}{dt}E_{F}\left( \gamma _{t}^{v}o\phi \right)
_{t=t_{o}}=-\int_{M}\left\langle \tau _{F}\left( \gamma _{t_{o}}^{v}o\phi
\right) ,\overline{v}o\left( \gamma _{to}^{v}o\phi \right) \right\rangle
dv_{g}\text{.}
\end{equation*}%
By Lemma \ref{lem1} and the fact that $\phi $ is $F$-harmonic we get 
\begin{equation*}
\frac{d}{dt}E_{F}\left( \gamma _{t}^{v}o\phi \right) _{t=t_{o}}=
\end{equation*}%
\begin{equation*}
-\int_{M}2\alpha _{t_{o}}o\phi .F^{\prime }\left( \alpha _{t_{o}}^{2}o\phi .%
\frac{\left\vert d\phi \right\vert ^{2}}{2}\right) \left\langle \left(
\nabla \alpha _{t_{o}}o\phi \right) ^{T}-\frac{\left\vert d\phi \right\vert
^{2}}{2}\nabla \alpha _{t_{o}}o\phi ,\overline{v}o\phi \right\rangle dv_{g}
\end{equation*}%
\begin{equation*}
-\int_{M}F^{\prime }\left( \frac{\left\vert d\phi \right\vert ^{2}}{2}%
\right) \left\langle d\phi \left( \nabla f_{t_{o}}\right) ,\overline{v}o\phi
\right\rangle dv_{g}
\end{equation*}%
and since ( see \cite{18} ) 
\begin{equation}
\nabla \alpha _{t_{o}}^{v}=-\frac{\left( \alpha _{t_{o}}^{v}\right) ^{2}}{%
\left\vert v\right\vert }sht_{o}\overline{v}  \label{3}
\end{equation}%
we have 
\begin{equation}
\left\langle \nabla \alpha _{t_{o}}o\phi ,\overline{v}o\phi \right\rangle =-%
\frac{\left( \alpha _{t_{o}}^{v}\right) ^{2}}{\left\vert v\right\vert }%
sht_{o}\left\vert \overline{v}o\phi \right\vert ^{2}\text{.}  \label{4}
\end{equation}%
Now let $\left( e_{1},...,e_{m}\right) $ be an orthogonal basis on $M$%
\begin{equation*}
\left\langle \left( \nabla \alpha _{t_{o}}o\phi \right) ^{T},\overline{v}%
o\phi \right\rangle =\sum_{i=1}^{m}\left\langle \nabla \alpha _{t_{o}}o\phi
,d\phi (e_{i})\right\rangle \left\langle \overline{v}o\phi ,d\phi
(e_{i})\right\rangle
\end{equation*}%
\begin{equation*}
=-\frac{sht_{o}}{\left\vert v\right\vert }\left( \alpha _{t_{o}}o\phi
\right) ^{2}\sum_{i=1}^{m}\left\langle \overline{v}o\phi ,d\phi
(e_{i})\right\rangle ^{2}
\end{equation*}%
\begin{equation*}
=-\frac{sht_{o}}{\left\vert v\right\vert }\left( \alpha _{t_{o}}o\phi
\right) ^{2}\left\vert d\phi _{v}\right\vert ^{2}\text{.}
\end{equation*}

Hence 
\begin{equation*}
\frac{d}{dt}E_{F}\left( \gamma _{t}^{v}o\phi \right) _{t=t_{o}}=
\end{equation*}%
\begin{equation*}
-2\frac{sht_{o}}{\left\vert v\right\vert }\int_{M}\alpha _{t_{o}}^{3}o\phi
.F^{\prime }\left( \alpha _{t_{o}}^{2}o\phi .\frac{\left\vert d\phi
\right\vert ^{2}}{2}\right) \left( \frac{\left\vert d\phi \right\vert ^{2}}{2%
}\left\vert \overline{v}o\phi \right\vert ^{2}-\left\vert d\phi
_{v}\right\vert ^{2}\right) dv_{g}
\end{equation*}%
\begin{equation*}
-\int_{M}\alpha _{t_{o}}^{2}o\phi .F^{\prime }\left( \frac{\left\vert d\phi
\right\vert ^{2}}{2}\right) \left\langle d\phi \left( \nabla
f_{t_{o}}\right) ,\overline{v}o\phi \right\rangle dv_{g}\text{.}
\end{equation*}
\end{proof}

We set 
\begin{equation*}
g(t)=\int_{M}\alpha _{t}^{2}o\phi .F^{\prime }\left( \frac{\left\vert d\phi
\right\vert ^{2}}{2}\right) \left\langle d\phi \left( \nabla f_{t}\right) ,%
\overline{v}o\phi \right\rangle dv_{g}\text{.}
\end{equation*}

\begin{lemma}
\label{lem3} 
\begin{equation*}
g(t)=
\end{equation*}%
\begin{equation*}
\int_{M}\alpha _{t}^{3}o\phi .F^{\prime \prime }(\alpha _{t}^{2}o\phi .\frac{%
\left\vert d\phi \right\vert ^{2}}{2})\left\vert d\phi \right\vert
^{2}\left\langle d\phi \left( \nabla \left( \alpha _{t_{o}}o\phi \right)
\right) ,\overline{v}o\phi \right\rangle dv_{g}
\end{equation*}%
\begin{equation*}
+\int_{M}\alpha _{t}^{2}o\phi .\left( F^{\prime \prime }(\alpha
_{t}^{2}o\phi .\frac{\left\vert d\phi \right\vert ^{2}}{2})\alpha
_{t}^{2}o\phi -\frac{F^{\prime }(\alpha _{t}^{2}o\phi .\frac{\left\vert
d\phi \right\vert ^{2}}{2})}{F^{\prime }\left( \frac{\left\vert d\phi
\right\vert ^{2}}{2}\right) }F^{\prime \prime }\left( \frac{\left\vert d\phi
\right\vert ^{2}}{2}\right) \right) \frac{\phi _{v}}{\left\vert v\right\vert 
}\left\vert d\phi \right\vert ^{2}dv_{g}\text{.}
\end{equation*}
\end{lemma}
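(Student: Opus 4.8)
The plan is to obtain $g(t)$ by differentiating the scalar $f_t$ and sorting the two resulting gradients, $\nabla\!\left(\alpha_t^2\circ\phi\cdot\frac{|d\phi|^2}{2}\right)$ and $\nabla\frac{|d\phi|^2}{2}$, into the two stated integrals. The one structural identity I would record at the outset is that, for any function $\psi$ on $M$, $\langle d\phi(\nabla\psi),\overline v\circ\phi\rangle=\langle\nabla\psi,\nabla\phi_v\rangle$; this holds because $d\phi_v(X)=\langle v,d\phi(X)\rangle=\langle\overline v\circ\phi,d\phi(X)\rangle$, using $\langle\phi,d\phi(X)\rangle=0$ on the sphere. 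This converts every inner product appearing in $g(t)$ into an ordinary gradient pairing on $M$ against $\nabla\phi_v$.

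First I would compute $\nabla f_t$ by the quotient and chain rules, writing $f_t=F'(u)/F'(w)$ with $u=\alpha_t^2\circ\phi\cdot\frac{|d\phi|^2}{2}$ and $w=\frac{|d\phi|^2}{2}$, so that $F'(w)\nabla f_t=F''(u)\,\nabla u-\frac{F'(u)F''(w)}{F'(w)}\nabla w$. Expanding $\nabla u=\frac{|d\phi|^2}{2}\nabla(\alpha_t^2\circ\phi)+\alpha_t^2\circ\phi\,\nabla\frac{|d\phi|^2}{2}$ and substituting into $g(t)=\int_M\alpha_t^2\circ\phi\,F'(w)\langle\nabla f_t,\nabla\phi_v\rangle\,dv_g$ splits $g(t)$ into a $\nabla(\alpha_t^2\circ\phi)$-part and a $\nabla\frac{|d\phi|^2}{2}$-part. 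Using $\nabla(\alpha_t^2\circ\phi)=2(\alpha_t\circ\phi)\nabla(\alpha_t\circ\phi)$, the first part is exactly the first stated integral; this step is purely algebraic.

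The second part is $\int_M \alpha_t^2\circ\phi\left(F''(u)\alpha_t^2\circ\phi-\frac{F'(u)}{F'(w)}F''(w)\right)\langle\nabla\tfrac{|d\phi|^2}{2},\nabla\phi_v\rangle\,dv_g$, and the remaining work is to bring $\langle\nabla\frac{|d\phi|^2}{2},\nabla\phi_v\rangle$ into the stated $\phi_v|d\phi|^2$ form. Here I would integrate by parts and feed in that $\phi$ is $F$-harmonic. Concretely, $F$-harmonicity reads $F'(\frac{|d\phi|^2}{2})\tau(\phi)+F''(\frac{|d\phi|^2}{2})d\phi(\nabla\frac{|d\phi|^2}{2})=0$, while for a map into the unit sphere $\Delta\phi^a=\tau(\phi)^a-|d\phi|^2\phi^a$, whence the pointwise identity $\Delta\phi_v=\langle\tau(\phi),\overline v\circ\phi\rangle-|d\phi|^2\phi_v$. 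Combining these replaces the tension field by a multiple of $d\phi(\nabla\frac{|d\phi|^2}{2})$ and trades the gradient pairing for a term in $\phi_v|d\phi|^2$, with the factor (\ref{3}), $\nabla\alpha_t=-\frac{\alpha_t^2}{|v|}\,\mathrm{sh}\,t\,\overline v$, carrying along the $|v|$-dependence attached to the conformal factor.

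The main obstacle is precisely this last manipulation: the weight multiplying $\langle\nabla\frac{|d\phi|^2}{2},\nabla\phi_v\rangle$ depends on the point through both $\alpha_t\circ\phi$ and the energy density, so it cannot be pulled outside the integral, and the integration by parts will differentiate it, generating several auxiliary terms. The heart of the proof is to verify that, once the $F$-harmonic equation and the sphere identity for $\Delta\phi_v$ are inserted, these auxiliary terms cancel and only the single stated second integral survives. I expect the bookkeeping of these cancellations, rather than any conceptual difficulty, to be the delicate point.
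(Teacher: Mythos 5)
Your reduction of the lemma is the same as the paper's: you expand $\nabla f_t$ by the quotient and chain rules, the $\nabla(\alpha_t^2\circ\phi)$-part gives the first stated integral, and everything then hinges on converting $\langle d\phi(\nabla\frac{|d\phi|^2}{2}),\overline v\circ\phi\rangle$ into $\frac{\phi_v}{|v|}|d\phi|^2$ under the weight $\alpha_t^2\circ\phi\left(F''(u)\,\alpha_t^2\circ\phi-\frac{F'(u)}{F'(w)}F''(w)\right)$, where $u=\alpha_t^2\circ\phi\cdot\frac{|d\phi|^2}{2}$ and $w=\frac{|d\phi|^2}{2}$. But here your route departs from the paper's, and it is exactly here that your proposal stops being a proof. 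The paper does \emph{not} integrate by parts and does \emph{not} use $F$-harmonicity in this lemma (that hypothesis is consumed in Lemma \ref{lem2}); instead it establishes the \emph{pointwise} identity (\ref{4'}), $\langle d\phi(\nabla\frac{|d\phi|^2}{2}),\overline v\circ\phi\rangle=\frac{\phi_v}{|v|}|d\phi|^2$, valid for the map $\phi$ with no equation imposed: it rewrites the left side via torsion-freeness as $\langle\nabla_{d\phi(e_j)}\overline v\circ\phi,d\phi(e_j)\rangle+\langle[\overline v\circ\phi,d\phi(e_j)],d\phi(e_j)\rangle$, evaluates the bracket term as $-\frac{1}{2}\frac{d}{dt}\big|_{t=0}\alpha_t^2|d\phi(e_j)|^2=\frac{\phi_v}{|v|}|d\phi(e_j)|^2$ from the explicit conformal factor (\ref{1}), and kills the remaining term because $v-\overline v\circ\phi=\phi_v\,\phi$ is normal to the sphere, hence orthogonal to $\nabla_{e_j}d\phi(e_j)\in\phi^{-1}TS^n$. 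Being pointwise, this identity passes under the arbitrary weight, which is precisely what the lemma's second integral requires.

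Your alternative has a concrete gap, not just missing bookkeeping. Combining your two identities under $F$-harmonicity gives $\langle\nabla w,\nabla\phi_v\rangle=-\frac{F'(w)}{F''(w)}\left(\Delta\phi_v+|d\phi|^2\phi_v\right)$, which already requires dividing by $F''(w)$ -- illegitimate wherever $F''$ vanishes (identically so for $F(t)=t$, and nothing in the admissibility hypothesis prevents zeros of $F''$). Worse, integrating the $\Delta\phi_v$-term by parts differentiates the point-dependent weight $\alpha_t^2\circ\phi\left(\frac{F''(u)F'(w)}{F''(w)}\alpha_t^2\circ\phi-F'(u)\right)$, which produces (i) terms in $F'''$, and (ii) via (\ref{3}), since $\nabla(\alpha_t\circ\phi)=-\frac{(\alpha_t\circ\phi)^2}{|v|}\,\mathrm{sh}\,t\,\nabla\phi_v$, terms in $|d\phi_v|^2$. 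Neither type of term appears in the statement of the lemma, and you exhibit no mechanism by which they cancel; you only say you ``expect'' the cancellations to work out. That expected cancellation is the entire content of the lemma, so as written the proposal does not prove it; the step you defer is not routine bookkeeping but the place where the paper substitutes a completely different (local, conformal-geometric) argument.
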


\begin{proof}
First, we compute $\nabla f_{t}$%
\begin{equation*}
\nabla f_{t}=\frac{F^{\prime \prime }(\alpha _{t}^{2}o\phi .\frac{\left\vert
d\phi \right\vert ^{2}}{2})}{F^{\prime }\left( \frac{\left\vert d\phi
\right\vert ^{2}}{2}\right) }\left( \alpha _{t}o\phi .\left\vert d\phi
\right\vert ^{2}\nabla \left( \alpha _{t}o\phi \right) +\alpha _{t}^{2}o\phi
.\left\langle \nabla d\phi ,d\phi \right\rangle \right)
\end{equation*}%
\begin{equation*}
-\frac{F^{\prime }(\alpha _{t}^{2}o\phi .\frac{\left\vert d\phi \right\vert
^{2}}{2})}{F^{\prime }\left( \frac{\left\vert d\phi \right\vert ^{2}}{2}%
\right) ^{2}}F^{\prime \prime }\left( \frac{\left\vert d\phi \right\vert ^{2}%
}{2}\right) \left\langle \nabla d\phi ,d\phi \right\rangle
\end{equation*}%
\begin{equation*}
=\frac{F^{\prime \prime }(\alpha _{t}^{2}o\phi .\frac{\left\vert d\phi
\right\vert ^{2}}{2})}{F^{\prime }\left( \frac{\left\vert d\phi \right\vert
^{2}}{2}\right) }\alpha _{t}o\phi \left\vert d\phi \right\vert ^{2}\nabla
\left( \alpha _{t}o\phi \right)
\end{equation*}%
\begin{equation*}
+\left( \frac{F^{\prime \prime }(\alpha _{t}^{2}o\phi .\frac{\left\vert
d\phi \right\vert ^{2}}{2})}{F^{\prime }\left( \frac{\left\vert d\phi
\right\vert ^{2}}{2}\right) }\alpha _{t}^{2}o\phi -\frac{F^{\prime }(\alpha
_{t}^{2}o\phi .\frac{\left\vert d\phi \right\vert ^{2}}{2})}{F^{\prime
}\left( \frac{\left\vert d\phi \right\vert ^{2}}{2}\right) ^{2}}F^{\prime
\prime }\left( \frac{\left\vert d\phi \right\vert ^{2}}{2}\right) \right)
\left\langle \nabla d\phi ,d\phi \right\rangle \text{.}
\end{equation*}%
Then 
\begin{equation*}
g(t)=\int_{M}\alpha _{t_{o}}^{2}o\phi .F^{\prime }\left( \frac{\left\vert
d\phi \right\vert ^{2}}{2}\right) \left\langle d\phi \left( \nabla
f_{t_{o}}\right) ,\overline{v}o\phi \right\rangle dv_{g}
\end{equation*}%
\begin{equation*}
=\int_{M}\alpha _{t_{o}}^{3}o\phi .F^{\prime \prime }(\alpha
_{t_{o}}^{2}o\phi .\frac{\left\vert d\phi \right\vert ^{2}}{2})\left\vert
d\phi \right\vert ^{2}\left\langle d\phi \left( \nabla \left( \alpha
_{t_{o}}o\phi \right) \right) ,\overline{v}o\phi \right\rangle dv_{g}
\end{equation*}%
\begin{equation}
+\int_{M}\alpha _{t_{o}}^{2}o\phi .\left( F^{\prime \prime }(\alpha
_{t_{o}}^{2}o\phi .\frac{\left\vert d\phi \right\vert ^{2}}{2})\alpha
_{t_{o}}^{2}o\phi -\frac{F^{\prime }(\alpha _{t_{o}}^{2}o\phi .\frac{%
\left\vert d\phi \right\vert ^{2}}{2})}{F^{\prime }\left( \frac{\left\vert
d\phi \right\vert ^{2}}{2}\right) }F^{\prime \prime }\left( \frac{\left\vert
d\phi \right\vert ^{2}}{2}\right) \right)  \label{3''}
\end{equation}%
\begin{equation*}
\times \left\langle d\phi \left( \nabla \left\vert d\phi \right\vert
^{2}\right) ,\overline{v}o\phi \right\rangle dv_{g}\text{.}
\end{equation*}%
Let $\left\{ e_{1},...,e_{m}\right\} $ be a basis of $T_{x}M$ which
diagonalizes $\phi ^{\ast }can$, we have 
\begin{equation*}
\left\langle d\phi \left( \nabla \frac{\left\vert d\phi \right\vert ^{2}}{2}%
\right) ,\overline{v}o\phi \right\rangle =\left\langle \nabla _{e_{i}}d\phi
,d\phi \right\rangle \left\langle \overline{v}o\phi ,d\phi
(e_{j})\right\rangle \left\langle d\phi (e_{i}),d\phi (e_{j})\right\rangle
\end{equation*}%
\begin{equation*}
=\left\langle \nabla _{e_{i}}d\phi ,d\phi \right\rangle \left\langle 
\overline{v}o\phi ,d\phi (e_{j})\right\rangle \phi ^{\ast }can\left(
e_{i},e_{j}\right)
\end{equation*}%
\begin{equation*}
=\left\langle \nabla _{\overline{v}o\phi }d\phi \left( e_{j}\right) ,d\phi
\left( e_{j}\right) \right\rangle
\end{equation*}%
\begin{equation*}
=\left\langle \nabla _{d\phi \left( e_{j}\right) }\overline{v}o\phi ,d\phi
\left( e_{j}\right) \right\rangle +\left\langle \left[ \overline{v}o\phi
,d\phi \left( e_{j}\right) \right] ,d\phi \left( e_{j}\right) \right\rangle 
\text{.}
\end{equation*}%
Likewise we get%
\begin{equation*}
\left\langle \left[ \overline{v}o\phi ,d\phi \left( e_{j}\right) \right]
,d\phi \left( e_{j}\right) \right\rangle =-\frac{1}{2}\frac{d}{dt}\mid
_{t=0}\gamma _{t}^{\ast }\left\vert d\phi \left( e_{j}\right) \right\vert
^{2}
\end{equation*}%
\begin{equation*}
=-\frac{1}{2}\frac{d}{dt}\mid _{t=0}\alpha _{t}^{2}\left\vert d\phi \left(
e_{j}\right) \right\vert ^{2}
\end{equation*}%
and taking account of (\ref{1}) we obtain that 
\begin{equation*}
\left\langle \left[ \overline{v}o\phi ,d\phi \left( e_{j}\right) \right]
,d\phi \left( e_{j}\right) \right\rangle =\frac{\phi _{v}}{\left\vert
v\right\vert }\left\vert d\phi \left( e_{j}\right) \right\vert ^{2}
\end{equation*}%
so we infer that 
\begin{equation*}
\left\langle d\phi \left( \nabla \frac{\left\vert d\phi \right\vert ^{2}}{2}%
\right) ,\overline{v}o\phi \right\rangle =\frac{\phi _{v}}{\left\vert
v\right\vert }\left\vert d\phi \right\vert ^{2}+\left\langle \nabla _{e_{j}}%
\overline{v}o\phi ,d\phi \left( e_{j}\right) \right\rangle
\end{equation*}%
and 
\begin{equation*}
\left\langle \nabla _{e_{j}}\overline{v}o\phi ,d\phi \left( e_{j}\right)
\right\rangle =\nabla _{e_{j}}\left\langle \overline{v}o\phi ,d\phi \left(
e_{j}\right) \right\rangle -\left\langle \overline{v}o\phi ,\nabla
_{e_{j}}d\phi \left( e_{j}\right) \right\rangle
\end{equation*}%
\begin{equation*}
=\nabla _{e_{j}}\left\langle v,d\phi \left( e_{j}\right) \right\rangle
-\left\langle \overline{v}o\phi ,\nabla _{e_{j}}d\phi \left( e_{j}\right)
\right\rangle
\end{equation*}%
\begin{equation*}
=\left\langle v,\nabla _{e_{j}}d\phi \left( e_{j}\right) \right\rangle
-\left\langle \overline{v}o\phi ,\nabla _{e_{j}}d\phi \left( e_{j}\right)
\right\rangle
\end{equation*}%
\begin{equation*}
=\left\langle v-\overline{v}o\phi ,\nabla _{e_{j}}d\phi \left( e_{j}\right)
\right\rangle =0\text{.}
\end{equation*}%
Hence%
\begin{equation}
\left\langle d\phi \left( \nabla \frac{\left\vert d\phi \right\vert ^{2}}{2}%
\right) ,\overline{v}o\phi \right\rangle =\frac{\phi _{v}}{\left\vert
v\right\vert }\left\vert d\phi \right\vert ^{2}\text{.}  \label{4'}
\end{equation}
\end{proof}

Now set 
\begin{equation*}
\varphi (t_{o})=2\frac{sht_{o}}{\left\vert v\right\vert }\int_{M}\alpha
_{t_{o}}^{3}o\phi .F^{\prime }\left( \alpha _{t_{o}}^{2}o\phi .\frac{%
\left\vert d\phi \right\vert ^{2}}{2}\right) \left( -\frac{\left\vert d\phi
\right\vert ^{2}}{2}\left\vert \overline{v}o\phi \right\vert ^{2}+\left\vert
d\phi _{v}\right\vert ^{2}\right) dv_{g}
\end{equation*}%
\begin{equation*}
-\int_{M}\alpha _{t_{o}}^{3}o\phi .F^{\prime \prime }(\alpha
_{t_{o}}^{2}o\phi .\frac{\left\vert d\phi \right\vert ^{2}}{2})\frac{%
\left\vert d\phi \right\vert ^{2}}{2}\left\langle d\phi \left( \nabla \left(
\alpha _{t_{o}}o\phi \right) \right) ,\overline{v}o\phi \right\rangle dv_{g}
\end{equation*}%
and since by (\ref{3}) we have $_{{}}$%
\begin{equation*}
\left\langle d\phi \left( \nabla \left( \alpha _{t_{o}}o\phi \right) \right)
,\overline{v}o\phi \right\rangle =-\frac{sht_{o}}{\left\vert v\right\vert }%
\alpha _{t_{o}}^{2}o\phi \left\vert d\phi _{v}\right\vert ^{2}
\end{equation*}%
we get%
\begin{equation}
\varphi (t_{o})=2\frac{sht_{o}}{\left\vert v\right\vert }\int_{M}\alpha
_{t_{o}}^{3}o\phi .\left[ \left( F^{\prime }\left( \alpha _{t_{o}}^{2}o\phi .%
\frac{\left\vert d\phi \right\vert ^{2}}{2}\right) +\alpha _{t_{o}}^{2}o\phi
.\frac{\left\vert d\phi \right\vert ^{2}}{2}F^{\prime \prime }\left( \alpha
_{t_{o}}^{2}o\phi .\frac{\left\vert d\phi \right\vert ^{2}}{2}\right)
\right) \left\vert d\phi _{v}\right\vert ^{2}\right.  \label{5}
\end{equation}%
\begin{equation*}
\left. -F^{\prime }\left( \alpha _{t_{o}}^{2}o\phi .\frac{\left\vert d\phi
\right\vert ^{2}}{2}\right) \frac{\left\vert d\phi \right\vert ^{2}}{2}%
\left\vert \overline{v}o\phi \right\vert ^{2}\right] dv_{g}\text{.}
\end{equation*}%
or 
\begin{equation*}
\varphi (t_{o})=-2\frac{sht_{o}}{\left\vert v\right\vert }\int_{M}\alpha
_{t_{o}}^{3}o\phi .S_{g}^{F}\left( \gamma _{t}^{v}o\phi \right) dv_{g}\text{.%
}
\end{equation*}

\begin{proof}
( of Theorem \ref{th2}) \ Recall ( see \cite{13} ) that for any conformal
diffeomorphism $\gamma $ of the unit sphere $S^{n}$ there exist an isometry $%
r\in O\left( n+1\right) $, a real number $t\geq 0$ and a vector $v\in
R^{n+1}-\left\{ 0\right\} $ such that $\gamma =ro\gamma _{t}^{v}$, so it
suffices to consider $\gamma _{t}^{v}$ with $t\geq 0$ and $v\in
R^{n+1}-\left\{ 0\right\} $.

On the other hand 
\begin{equation*}
\frac{d}{dt}E_{F}\left( \gamma _{t}^{v}o\phi \right) =\varphi (t)+\chi (t)
\end{equation*}%
where%
\begin{equation*}
\chi (t)=-\int_{M}\alpha _{t}^{2}o\phi .\left( F^{\prime \prime }(\alpha
_{t}^{2}o\phi .\frac{\left\vert d\phi \right\vert ^{2}}{2})\alpha
_{t}^{2}o\phi -\frac{F^{\prime }(\alpha _{t}^{2}o\phi .\frac{\left\vert
d\phi \right\vert ^{2}}{2})}{F^{\prime }\left( \frac{\left\vert d\phi
\right\vert ^{2}}{2}\right) }F^{\prime \prime }\left( \frac{\left\vert d\phi
\right\vert ^{2}}{2}\right) \right) \frac{\phi _{v}}{\left\vert v\right\vert 
}\left\vert d\phi \right\vert ^{2}dv_{g}
\end{equation*}%
and $\varphi \left( t\right) $ is given by (\ref{5}). Now, since the
function $F$ is admissible we infer that$\ \chi (t)\leq 0$ . Since the $F$
energy-stress tensor $S_{g}^{F}\left( \phi \right) $of $\phi $ is positive (
resp. positive defined ) by assumption and

\begin{equation*}
\geq S_{g}^{F}\left( \phi \right)
\end{equation*}%
so the tensor field $S_{g}^{F}\left( \gamma _{t}o\phi \right) $ is positive
( resp. positive defined ). Consequently $\varphi (t)\leq 0$ ( resp. $%
\varphi (t)<0$ ) for any $\ t\geq 0$ and the proof of Theorem \ref{th2} is
complete.
\end{proof}

\end{document}